\newtheorem{theorem}{Theorem}[section]
\theoremstyle{plain}
\newtheorem{acknowledgement}{Acknowledgement}
\newtheorem{corollary}[theorem]{Corollary}
\newtheorem{remark}[theorem]{Remark}
\numberwithin{equation}{section}
\begin{document}
\title[Univalence Criteria ]{Univalence Criteria and Quasiconformal
Extensions}
\author{Murat \c{C}A\u{G}LAR}
\address{Department of Mathematics, Faculty of Science, Ataturk University,
Erzurum, 25240, Turkey.}
\email{mcaglar25@gmail.com}
\author{Halit ORHAN}
\address{Department of Mathematics, Faculty of Science, Ataturk University,
Erzurum, 25240, Turkey.}
\email{orhanhalit607@gmail.com}
\subjclass[2000]{ Primary 30C45; Secondary 30C55.}
\keywords{Analytic function, univalence condition, Loewner or subordination
chain, quasiconformal extension.}

\begin{abstract}
In the present paper, we obtain a more general conditions for univalence of
analytic functions in the open unit disk $\mathcal{U}.$ Also, we obtain a
refinement to a quasiconformal extension criterion of the main result.
\end{abstract}

\maketitle

\section{Introduction}

Let $\mathcal{A}$ be the class of analytic functions $f$ in the open unit
disk $\mathcal{U=}\left\{ z\in 
%TCIMACRO{\U{2102} }%
%BeginExpansion
\mathbb{C}
%EndExpansion
:\left\vert z\right\vert <1\right\} $ with $f(0)=f^{\prime }(0)-1=0.$ We
denote by $\mathcal{U}_{r}$ the disk $\left\{ z\in {\mathbb{C}}:\left\vert
z\right\vert <r\right\} ,$ where $0<r\leq 1$, by $\mathcal{U}=\mathcal{U}%
_{1} $ the open unit disk of the complex plane and by $I$ the interval $%
[0,\infty )$.

Most important and known univalence criteria for analytic functions defined
in the open unit disk were obtained by Becker \cite{Bec}, Nehari \cite{Neh}
and Ozaki-Nunokawa \cite{Oz-Nu}. Some extensions of these three criteria
were given by (see \cite{Kan}, \cite{Mia-Wes}, \cite{Ove}, \cite{Radu1}-\cite%
{Radu5} and \cite{Tudor}). During the time, a lot of univalence criteria
were obtained by different authors (see also \cite{CaOr}, \cite{Deniz1}-\cite%
{Gol}).

In the present investigation we use the method of subordination chains to
obtain some sufficient conditions for the univalence of an analytic
function. Also, by using Becker's method, we obtain a refinement to a
quasiconformal extension criterion of the main result.

\section{Preliminaries}

Before proving our main theorem we need a brief summary of the method of
Loewner chains and quasiconformal extensions.

A function $L(z,t):\mathcal{U}\times \lbrack 0,\infty )\rightarrow \mathbb{C}
$ is said to be \textit{subordination chain} \textit{(or} \textit{Loewner
chain)} if:

\begin{itemize}
\item[(i)] $L(z,t)$ is analytic and univalent in $\mathcal{U}$ for all $%
t\geq 0$.

\item[(ii)] $L(z,t)\prec L(z,s)$ for all $0\leq t\leq s<\infty $, where the
symbol $"\prec "$ stands for subordination.
\end{itemize}

In proving our results, we will need the following theorem due to Ch.
Pommerenke \cite{Pom}.

\begin{theorem}
\label{t1}\textbf{\ }\textit{Let }$L(z,t)=a_{1}(t)z+a_{2}(t)z^{2}+...,%
\;a_{1}(t)\neq 0$\textit{\ be analytic in }$\mathcal{U}_{r}$\textit{\ for
all }$t\in I,$\textit{\ locally absolutely continuous in }$I,$\textit{\ and
locally uniform with respect to }$\mathcal{U}_{r}$\textit{$.$ For almost all 
}$t\in I,$\textit{\ suppose that}%
\begin{equation}
z\frac{\partial L(z,t)}{\partial z}=p(z,t)\frac{\partial L(z,t)}{\partial t},%
\text{ }\forall z\in \mathcal{U}_{r}  \label{1.1}
\end{equation}%
\textit{where }$p(z,t)$\textit{\ is analytic in }$\mathcal{U}$\textit{\ and
satisfies the condition }$\Re p(z,t)>0$\textit{\ for all }$z\in \mathcal{U}%
,\;t\in I.$\textit{\ If }$\left\vert a_{1}(t)\right\vert \rightarrow \infty $%
\textit{\ for }$t\rightarrow \infty $\textit{\ and }$\{L(z,t)\diagup
a_{1}(t)\}$\textit{\ forms a normal family in }$\mathcal{U}_{r},$\textit{\
then for each }$t\in I,$\textit{\ the function }$L(z,t)$\textit{\ has an
analytic and univalent extension to the whole disk }$\mathcal{U}.$
\end{theorem}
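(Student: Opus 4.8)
The plan is to construct the Loewner transition functions by the method of characteristics and to use them both to establish subordination on $\mathcal{U}_r$ and to build the univalent extension to $\mathcal{U}$. First I would read off from \eqref{1.1} the behaviour at the origin: comparing coefficients of $z$ on the two sides gives $a_1(t)=p(0,t)\,a_1'(t)$, so that $p(0,t)=a_1(t)/a_1'(t)$ and in particular $\Re\{a_1'(t)/a_1(t)\}=\Re\{1/p(0,t)\}>0$ for almost all $t$; hence $|a_1(t)|$ is strictly increasing, consistent with $|a_1(t)|\to\infty$. This identifies the derivative of the flow at the origin and is what links the growth of $a_1$ to the contraction used later.

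Next I would fix $s\ge 0$ and, for $z\in\mathcal{U}$, solve the characteristic Cauchy problem $\partial_t v=-v/p(v,t)$, $v(z,s,s)=z$, for $t\ge s$. Since $\Re p>0$ on all of $\mathcal{U}$, the function $1/p$ is analytic there with $\Re\{1/p\}=\Re p/|p|^2>0$; hence $\tfrac{d}{dt}|v|^2=-2|v|^2\,\Re\{1/p(v,t)\}<0$, so $v(\cdot,s,t)$ maps $\mathcal{U}$ into itself, fixes the origin, and (by uniqueness and analytic dependence on initial data) is analytic and univalent in $z$ with $\partial_z v(0,s,t)=a_1(s)/a_1(t)$. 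The crucial quantitative point is that $v\to 0$ as $t\to\infty$: because $\Re\{1/p(\cdot,t)\}$ is a positive harmonic function and the trajectory has non-increasing modulus, Harnack's inequality bounds it below by a fixed multiple of $\Re\{1/p(0,t)\}=\Re\{a_1'(t)/a_1(t)\}$ along the orbit, and $\int_s^\infty\Re\{a_1'/a_1\}\,dt=+\infty$ then forces $|v(z,s,t)|\to 0$. This is where the hypothesis $|a_1(t)|\to\infty$ enters decisively.

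With the characteristics in hand, differentiating $L(v(z,s,t),t)$ in $t$ and substituting \eqref{1.1} shows this quantity is constant wherever $L$ is defined, so $L(z,s)=L(v(z,s,t),t)$ on $\mathcal{U}_r$; since $v(\cdot,s,t)$ is a univalent self-map of $\mathcal{U}_r$ fixing $0$, this yields the subordination $L(\cdot,s)\prec L(\cdot,t)$ there. To pass to the full disk I would exploit that $p$, and hence the flow $v$, is defined on all of $\mathcal{U}$: for any $z\in\mathcal{U}$ the contraction above supplies a time $t_0(z)$ with $v(z,s,t)\in\mathcal{U}_r$ for $t\ge t_0(z)$, so I may set $\tilde L(z,s):=L(v(z,s,t),t)$ for such $t$. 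The flow (semigroup) property makes this independent of $t$, composition keeps it analytic, and it agrees with $L$ on $\mathcal{U}_r$. Injectivity on $\mathcal{U}$ follows since $L(\cdot,t)$ is univalent on $\mathcal{U}_r$ and the flow map is injective (reverse the ODE); here the normal-family hypothesis is used to keep $\{L(\cdot,t)/a_1(t)\}$ locally bounded and, via Hurwitz's theorem, to guarantee that the limiting functions remain univalent rather than degenerating.

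I expect the main obstacle to be precisely the contraction step $v(z,s,t)\to 0$ together with the resulting global extension: one must control the trajectory uniformly on compact subsets of $\mathcal{U}$ using only $\Re p>0$ and $|a_1(t)|\to\infty$, and then verify that the composition $\tilde L$ is well defined, single-valued, analytic, and injective on all of $\mathcal{U}$. The measurability and absolute-continuity bookkeeping needed to invoke Carathéodory's existence theorem for the characteristic ODE, so that analytic dependence on $z$ is legitimate, is routine but must be handled with some care.
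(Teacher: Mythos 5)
The paper itself contains no proof of this statement: it is quoted verbatim from Pommerenke, so your attempt can only be measured against the classical argument, whose outline you follow. Most of your steps are sound: the identity $p(0,t)=a_{1}(t)/a_{1}'(t)$, the characteristic equation $\partial_{t}v=-v/p(v,t)$, the monotonicity $\tfrac{d}{dt}|v|^{2}=-2|v|^{2}\Re\{1/p(v,t)\}<0$, the Harnack--Gronwall estimate giving $|v(z,s,t)|\leq |z|\left(|a_{1}(s)|/|a_{1}(t)|\right)^{c}\rightarrow 0$ with $c=c(|z|)>0$, and the definition $\tilde L(z,s)=L(v(z,s,t),t)$ for large $t$, which is $t$-independent by the flow property and analytic on $\mathcal{U}$. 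Up to that point you have a correct \emph{analytic} extension of $L(\cdot,s)$.

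The genuine gap is the univalence step, and it is not a technicality: you deduce injectivity of $\tilde L(\cdot,s)$ from the assertion that ``$L(\cdot,t)$ is univalent on $\mathcal{U}_{r}$,'' but univalence of $L(\cdot,t)$ is nowhere among the hypotheses --- only analyticity with $a_{1}(t)\neq 0$ is assumed, and univalence (already on $\mathcal{U}_{r}$) is precisely the content of the theorem. Your argument is therefore circular. A symptom of the problem is that the normal-family hypothesis never does any real work in your construction (your closing appeal to Hurwitz is not attached to any sequence of univalent functions you have actually produced), whereas the theorem is false without it. The standard repair is to realize the extension as a limit rather than as a composition at finite time: writing $h_{t}(w)=L(w,t)/a_{1}(t)=w+\cdots$, normality makes $\{h_{t}\}$ locally bounded, so by Cauchy estimates $h_{t}(w)=w+O(w^{2})$ uniformly in $t$ near the origin; combining this with $L(z,s)=a_{1}(t)h_{t}(v(z,s,t))$ and $v\rightarrow 0$ yields $L(z,s)=\lim_{t\rightarrow\infty}a_{1}(t)v(z,s,t)$ near $0$. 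The functions $g_{t}=a_{1}(t)v(\cdot,s,t)$ are univalent on all of $\mathcal{U}$ (an injective flow map times a nonzero constant), satisfy $g_{t}(0)=0$ and $g_{t}'(0)=a_{1}(s)$, hence form a normal family by the Koebe distortion theorem; by Vitali's theorem they converge locally uniformly on $\mathcal{U}$, and by Hurwitz the limit is univalent, being nonconstant because its derivative at $0$ is $a_{1}(s)\neq 0$. This limit agrees with $L(\cdot,s)$ near $0$, hence on $\mathcal{U}_{r}$ by the identity theorem, and is the desired univalent extension. That identification argument is exactly where $|a_{1}(t)|\to\infty$, $a_{1}(t)\neq0$ and the normal-family hypothesis enter, and it is the step your proposal is missing.
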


Let $k$ be constant in $[0,1).$ Then a homeomorphism $f$ of $G\subset 
\mathbb{C}$ is said to be $k-$\textit{quasiconformal, }if $\partial _{z}f$
and $\partial _{\overline{z}}f$ in the distributional sense are locally
integrable on $G$ and fulfill the inequality $\left\vert \partial _{%
\overline{z}}f\right\vert \leq k\left\vert \partial _{z}f\right\vert $
almost everywhere in $G.$ If we do not need to specify $k,$ we will simply
call that $f$ is \textit{quasiconformal.}

The method of constructing quasiconformal extension criteria is based on the
following result due to Becker (see \cite{Bec}, \cite{Be2} and also \cite%
{Be3}).

\begin{theorem}
\label{t2*}Suppose that $L(z,t)$ is a Loewner chain. Consider%
\begin{equation*}
w(z,t)=\frac{p(z,t)-1}{p(z,t)+1},\;z\in \mathcal{U},\;t\geq 0
\end{equation*}
where $p(z,t)$ is given in (\ref{1.1}). If%
\begin{equation*}
\left\vert w(z,t)\right\vert \leq k,\;0\leq k<1
\end{equation*}%
for all $z\in \mathcal{U}$ and $t\geq 0,$ then $L(z,t)$ admits a continuous
extension to $\overline{\mathcal{U}}$ for each $t\geq 0$ and the function $%
F(z,\overline{z})$ defined by%
\begin{equation*}
F(z,\overline{z})=\left\{ 
\begin{array}{c}
\mathcal{L}(z,0),\text{ \ \ \ \ \ \ \ if \ }\left\vert z\right\vert <1 \\ 
\mathcal{L}(\frac{z}{\left\vert z\right\vert },\log \left\vert z\right\vert
),\text{ \ if \ }\left\vert z\right\vert \geq 1%
\end{array}%
\right.
\end{equation*}%
is a $k-$quasiconformal extension of $L(z,0)$ to ${\mathbb{C}}.$
\end{theorem}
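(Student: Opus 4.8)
The plan is to verify directly that the prescribed map $F$ is a $k$-quasiconformal homeomorphism of $\mathbb{C}$, the heart of the matter being the computation of its complex dilatation in the exterior region $|z|>1$, which is the only place where the formula for $F$ genuinely involves $\bar z$. First I would set up convenient coordinates: for $|z|>1$ write $z=\rho e^{i\theta}$ with $\rho=|z|$, and note that under the prescription of the theorem the exterior value is $F(z,\bar z)=L(\zeta,t)$ with $t=\log\rho=\log|z|$ and boundary point $\zeta=z/|z|=e^{i\theta}$. Using $\rho^{2}=z\bar z$ and $\theta=\tfrac{1}{2i}\log(z/\bar z)$ I would record the Wirtinger derivatives $\partial_z t=1/(2z)$, $\partial_{\bar z}t=1/(2\bar z)$, $\partial_z\zeta=\zeta/(2z)$, $\partial_{\bar z}\zeta=-\zeta/(2\bar z)$.

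Then, since $L$ is holomorphic in its first slot, the chain rule gives
\[
\partial_z F=\frac{1}{2z}\left(\zeta\frac{\partial L}{\partial\zeta}+\frac{\partial L}{\partial t}\right),\qquad \partial_{\bar z}F=\frac{1}{2\bar z}\left(-\zeta\frac{\partial L}{\partial\zeta}+\frac{\partial L}{\partial t}\right).
\]
The decisive step is to insert the Loewner equation (\ref{1.1}), namely $\zeta\,\partial L/\partial\zeta=p(\zeta,t)\,\partial L/\partial t$, so that both derivatives collapse onto the common factor $\partial L/\partial t$:
\[
\partial_z F=\frac{1+p(\zeta,t)}{2z}\frac{\partial L}{\partial t},\qquad \partial_{\bar z}F=\frac{1-p(\zeta,t)}{2\bar z}\frac{\partial L}{\partial t}.
\]
Because $\Re p>0$ we have $1+p\neq0$, so $\partial_z F\neq0$ wherever $\partial L/\partial t\neq0$, and forming the quotient yields the complex dilatation
\[
\mu_F(z)=\frac{\partial_{\bar z}F}{\partial_z F}=\frac{z}{\bar z}\cdot\frac{1-p(\zeta,t)}{1+p(\zeta,t)}=-\frac{z}{\bar z}\,w(\zeta,t),
\]
whence $|\mu_F(z)|=|w(\zeta,t)|\le k$ throughout $|z|>1$. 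This is precisely the $k$-quasiconformality inequality $|\partial_{\bar z}F|\le k\,|\partial_z F|$.

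It then remains to argue that $F$ really is a homeomorphism of $\mathbb{C}$ carrying the required regularity. I would first observe that the hypothesis $|w|\le k<1$ confines $p(\cdot,t)$ to a fixed compact disk lying inside the right half-plane; this uniform control is what furnishes the continuous extension of each $L(\cdot,t)$ to $\overline{\mathcal{U}}$ asserted in the statement and makes $F$ continuous across $|z|=1$, since as $|z|\to1^{+}$ we have $t\to0$ and $\zeta\to z$, so $L(\zeta,t)\to L(z,0)$, matching the interior branch. Injectivity and surjectivity of $F$ I would extract from the structure of the chain: the subordination property nests the domains $L(\mathcal{U},t)$ increasingly in $t$, the univalence of each $L(\cdot,t)$ resolves each circle $\{|z|=e^{t}\}$, and $|a_{1}(t)|\to\infty$ forces $L(\mathcal{U},t)$ to exhaust $\mathbb{C}$, so the boundary curves $L(\partial\mathcal{U},t)$, $t\ge0$, sweep out the exterior simply. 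Finally one checks the analytic prerequisites in the definition of quasiconformality, the local integrability of $\partial_z F,\partial_{\bar z}F$ and absolute continuity on lines, which are inherited from the smoothness of $L$ in $(z,t)$ off the unit circle together with the continuity just established.

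I expect the genuine obstacle to lie not in the dilatation computation, which is mechanical once (\ref{1.1}) has been substituted, but in the topological and regularity bookkeeping: justifying the continuous boundary extension of the chain purely from the bound on $w$, proving that $F$ is globally one-to-one and onto rather than merely locally quasiconformal, and verifying the distributional and ACL hypotheses along the gluing circle $|z|=1$ where the two branches of $F$ meet.
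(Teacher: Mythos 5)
First, a point of calibration: the paper does not prove this statement at all. Theorem \ref{t2*} is quoted in the Preliminaries as a known result of Becker, with citations to \cite{Bec}, \cite{Be2}, \cite{Be3}, and is then used as a black box in the proof of Theorem \ref{t3}. So your attempt has to be judged against Becker's own argument rather than against anything in the paper. Your dilatation computation is indeed the computational core of that argument, and it is formally correct: the Wirtinger derivatives of $t=\log|z|$ and $\zeta=z/|z|$ are right, substituting (\ref{1.1}) collapses both derivatives of $F$ onto the common factor $\partial L/\partial t$, and one gets $|\mu_F|=|w(\zeta,t)|\leq k$. The genuine gap is \emph{where} you carry this out. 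You apply the chain rule and equation (\ref{1.1}) at boundary points $\zeta=e^{i\theta}$, but $L(\cdot,t)$ is only given as analytic in the open disk, (\ref{1.1}) is only assumed there (and only for almost every $t$), and a priori $L(\zeta,t)$ does not even exist for $|\zeta|=1$: the continuous extension to $\overline{\mathcal{U}}$ is part of the \emph{conclusion} of the theorem. Assuming enough boundary regularity to differentiate $L$ and evaluate $p$ on the unit circle is therefore circular. Likewise, your two sketches for the remaining points --- that the compact range of $p$ ``furnishes'' the continuous extension, and that nesting of the domains $L(\mathcal{U},t)$ yields global injectivity --- are statements of what must be proved, not proofs; as you yourself concede, these are the real obstacles, and they are left unresolved.

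The standard repair, and the way Becker's proof actually runs, is to insert an approximation parameter so that your computation happens strictly inside the disk, where it is licit. For $r<1$ set $F_r(z)=L(rz,0)$ for $|z|<1$ and $F_r(z)=L\bigl(rz/|z|,\log|z|\bigr)$ for $|z|\geq 1$. Your identical calculation, now at the interior point $rz/|z|$, gives
\begin{equation*}
\left\vert \partial_z F_r\right\vert^2-\left\vert \partial_{\bar z}F_r\right\vert^2
=\frac{\left\vert \partial L/\partial t\right\vert^2}{\left\vert z\right\vert^{2}}\,\Re\, p>0 ,
\end{equation*}
so each $F_r$ is an orientation-preserving local homeomorphism with $|\mu_{F_r}|\leq k$; global injectivity follows because $F_r$ is injective on each circle $|z|=\rho$ (univalence of $L(\cdot,\log\rho)$) and the images of distinct circles are disjoint (Schwarz lemma applied to the transition functions of the chain). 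Letting $r\to 1^-$ and invoking the convergence/compactness theorem for $k$-quasiconformal mappings (a locally uniform limit of $k$-quasiconformal embeddings is constant or a $k$-quasiconformal embedding) then delivers, in one stroke, the continuous extension of each $L(\cdot,t)$ to $\overline{\mathcal{U}}$, the homeomorphism property of $F$, the ACL and distributional-derivative requirements across $|z|=1$, and the dilatation bound. Without some such limiting argument your computation on the circle itself cannot be justified from the hypotheses, so the proposal as written is a correct heuristic but not a proof.
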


Examples of quasiconformal extension criteria can be found in \cite{Ah}, 
\cite{AnHi}, \cite{Bet}, \cite{Kr}, \cite{Pf} and more recently in \cite%
{Deniz}, \cite{Ho1}-\cite{Ho3}, \cite{Radu6}.

\section{Main Results}

Making use of Theorem \ref{t1} we can prove now, our main results.

\begin{theorem}
\label{t2} Consider $f\in \mathcal{A}$ and $g$ be an analytic function in $%
\mathcal{U},\;g(z)=1+b_{1}z+....$ Let $\alpha ,\beta ,A$ and $B$ complex
numbers such that $\Re (\alpha )>\frac{1}{2},$ $A+B\neq 0,\;\left\vert
A-B\right\vert <2,\;\left\vert A\right\vert \leq 1$ and $\left\vert
B\right\vert \leq 1.$ If the inequalities%
\begin{equation}
\left\vert \frac{1}{\alpha }\left( \frac{f^{\prime }(z)}{g(z)-\beta }%
-1\right) \right\vert <\frac{\left\vert A+B\right\vert }{2-\left\vert
A-B\right\vert }  \label{eq4}
\end{equation}%
and%
\begin{equation}
\left\vert \left( \frac{f^{\prime }(z)}{g(z)-\beta }-1\right) \left\vert
z\right\vert ^{2}+\left( 1-\left\vert z\right\vert ^{2}\right) \left[ \left( 
\frac{1-\alpha }{\alpha }\right) \frac{zf^{\prime }(z)}{f(z)}+\frac{%
zg^{\prime }(z)}{g(z)-\beta }\right] -\frac{\left( \overline{A}-\overline{B}%
\right) \left( A+B\right) }{4-\left\vert A-B\right\vert ^{2}}\right\vert
\leq \frac{2\left\vert A+B\right\vert }{4-\left\vert A-B\right\vert ^{2}}
\label{eq5}
\end{equation}%
are satisfied for all $z\in \mathcal{U},$ then the function $f$ is univalent
in $\mathcal{U}$.
\end{theorem}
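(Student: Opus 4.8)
The plan is to use the method of subordination chains together with Pommerenke's Theorem \ref{t1}. Setting $u=e^{-t}z$, I would construct an explicit chain $L(z,t)$, analytic in a disc $\mathcal{U}_{r}$ and normalised so that $L(z,0)=f(z)$, built from $f,g$ and the parameters $\alpha ,\beta $ in such a way that the function $w(z,t)=(p-1)/(p+1)$ of Theorem \ref{t2*}, with $p=p(z,t)$ the function in equation (\ref{1.1}), is
\begin{equation*}
w(z,t)=e^{-2t}\left( \frac{f'(u)}{g(u)-\beta }-1\right) +\left( 1-e^{-2t}\right) \left[ \frac{1-\alpha }{\alpha }\,\frac{uf'(u)}{f(u)}+\frac{ug'(u)}{g(u)-\beta }\right] .
\end{equation*}
The virtue of this choice is that the two blocks are exactly the $\left\vert z\right\vert ^{2}$- and $\left( 1-\left\vert z\right\vert ^{2}\right) $-coefficients in (\ref{eq5}): on $\left\vert z\right\vert =1$ one has $e^{-2t}=\left\vert u\right\vert ^{2}$, so $w(z,t)$ collapses to the expression inside the modulus in (\ref{eq5}) (its variable being $u$).

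I would then check the hypotheses of Theorem \ref{t1}. Condition (\ref{eq4}) forces $g(u)-\beta \neq 0$ on $\mathcal{U}$ and confines $f'/(g-\beta )$ to a bounded region, which makes $L(z,t)$ and $w(z,t)$ analytic in $z$ and the family $\{L(z,t)/a_{1}(t)\}$ normal. Comparing coefficients of $z$ in (\ref{1.1}) gives $a_{1}'(t)/a_{1}(t)=1/p(0,t)\rightarrow 2\alpha -1$ as $t\rightarrow \infty $, whence $\left\vert a_{1}(t)\right\vert \sim e^{(2\Re \alpha -1)t}$; thus $\Re (\alpha )>\tfrac{1}{2}$ is precisely what secures $\left\vert a_{1}(t)\right\vert \rightarrow \infty $. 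The same computation gives $w(0,t)\rightarrow (1-\alpha )/\alpha $, and $\left\vert (1-\alpha )/\alpha \right\vert <1$ is again equivalent to $\Re (\alpha )>\tfrac{1}{2}$.

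The core of the proof is the estimate $\Re p(z,t)>0$, i.e. $\left\vert w(z,t)\right\vert <1$ on $\mathcal{U}\times I$. For fixed $t>0$ the map $z\mapsto w(z,t)$ is analytic on $\overline{\mathcal{U}}$ (because $\left\vert u\right\vert \leq e^{-t}<1$), so by the maximum principle it is enough to bound it on $\left\vert z\right\vert =1$; there $w(z,t)$ is the bracketed quantity of (\ref{eq5}), so (\ref{eq5}) places $w(z,t)$ in the closed disc $D$ of centre $c_{0}=\dfrac{(\overline{A}-\overline{B})(A+B)}{4-\left\vert A-B\right\vert ^{2}}$ and radius $\varrho =\dfrac{2\left\vert A+B\right\vert }{4-\left\vert A-B\right\vert ^{2}}$. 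A short computation gives
\begin{equation*}
\left\vert c_{0}\right\vert +\varrho =\frac{\left\vert A+B\right\vert \left( 2+\left\vert A-B\right\vert \right) }{\left( 2-\left\vert A-B\right\vert \right) \left( 2+\left\vert A-B\right\vert \right) }=\frac{\left\vert A+B\right\vert }{2-\left\vert A-B\right\vert }\leq 1,
\end{equation*}
the inequality coming from $\left\vert A\right\vert \leq 1$, $\left\vert B\right\vert \leq 1$, $\left\vert A-B\right\vert <2$. Hence $D\subseteq \overline{\mathcal{U}}$ and $\left\vert w(z,t)\right\vert \leq 1$ for all $z$ and all $t>0$; the strict inequality required for $\Re p>0$ then comes from the strict bound in (\ref{eq4}) (controlling the degenerate case $\left\vert z\right\vert \to 1$, $t\to 0$) and from $\left\vert (1-\alpha )/\alpha \right\vert <1$ for $t\to \infty $. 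Equivalently, $D$ is the image of $\overline{\mathcal{U}}$ under the Schwarz transform $\omega \mapsto (A-B)\omega /\left[ 2+(A+B)\omega \right] $ of the Janowski function $(1+A\omega )/(1+B\omega )$, so (\ref{eq5}) is exactly the subordination $p\prec (1+Az)/(1+Bz)$, whose range lies in the closed right half-plane under the stated conditions on $A,B$.

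Once $\Re p(z,t)>0$ is in hand and the other hypotheses are verified, Theorem \ref{t1} gives that every $L(z,t)$, in particular $f=L(\cdot ,0)$, extends univalently to $\mathcal{U}$, which is the claim. I expect the main obstacle to be the construction step: producing one chain that is simultaneously analytic in $\mathcal{U}_{r}$, correctly normalised, and responsible for the two-term $w(z,t)$ above, since the block $f'/(g-\beta )$ and the block $(1-\alpha )\alpha ^{-1}uf'/f$ pull the construction in opposite directions and force a careful choice of the branch of the relevant power near $0$; after that, upgrading the closed-disc bound $\left\vert w\right\vert \leq 1$ to the strict $\Re p>0$ is the second delicate point, and it is there that (\ref{eq4}) and $\Re (\alpha )>\tfrac{1}{2}$, not (\ref{eq5}) alone, are indispensable.
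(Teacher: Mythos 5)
Your strategy coincides with the paper's own: an explicit Loewner chain with $L(z,0)=f(z)$, a M\"obius transform of the Pommerenke function $p$, the observation that on $|z|=1$ the substitution $u=e^{-t}z$, $e^{-2t}=|u|^{2}$ turns the boundary values into the quantity estimated in (\ref{eq5}), the maximum modulus principle, and finally Theorem \ref{t1}. The paper works with $w=(p-1)/(A+Bp)$ and uses (implicitly, by completing a square) that (\ref{eq5}) is \emph{exactly} the condition $|w|\le 1$; your version, with $w=(p-1)/(p+1)$ required to land in the disc $D$ of centre $c_{0}=(\overline{A}-\overline{B})(A+B)/(4-|A-B|^{2})$ and radius $\varrho =2|A+B|/(4-|A-B|^{2})$, is the same computation in different bookkeeping, since $D$ is precisely the image of $\overline{\mathcal{U}}$ under the M\"obius map inverse to $\phi \mapsto -2\phi /[(A-B)\phi +A+B]$. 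The genuine gap is the one you flag yourself: the chain is never produced, and it is the heart of the proof. The paper's argument is essentially the formula $L(z,t)=f^{1-\alpha }(e^{-t}z)\left[ f(e^{-t}z)+(e^{t}-e^{-t})z\left( g(e^{-t}z)-\beta \right) \right] ^{\alpha }$ (its (\ref{3.3})), together with the branch choice near the origin, the computation $a_{1}(t)=e^{(2\alpha -1)t}\left[ \beta e^{-2t}+1-\beta \right] ^{\alpha }$ (whence $\Re (\alpha )>\frac{1}{2}$ gives $|a_{1}(t)|\rightarrow \infty $), the normality and local absolute continuity checks, and the verification that $(p-1)/(p+1)$ has the two-block form (\ref{3.10}). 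Everything you list after the construction is routine and you describe it correctly; without the construction there is a plan, not a proof. Note also that the chain (\ref{3.3}) produces the first block as $\frac{1}{\alpha }\frac{f^{\prime }(u)}{g(u)-\beta }-1$ (see (\ref{3.10})), not as your $\frac{f^{\prime }(u)}{g(u)-\beta }-1$; whether some chain realizes your exact $w$ is unclear, and this mismatch (an inconsistency the paper itself carries between (\ref{3.10}) and (\ref{eq5})) is precisely the kind of issue only the explicit construction can settle.

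There is also a step that is false as stated. You claim $|c_{0}|+\varrho =\frac{|A+B|}{2-|A-B|}\le 1$ follows from $|A|\le 1$, $|B|\le 1$, $|A-B|<2$. It does not: that inequality is equivalent to $|A+B|+|A-B|\le 2$, and for the admissible choice $A=1$, $B=i$ one gets $\frac{|A+B|}{2-|A-B|}=\frac{\sqrt{2}}{2-\sqrt{2}}=1+\sqrt{2}>1$, so $D\not\subseteq \overline{\mathcal{U}}$ and the conclusion $\Re p\ge 0$ breaks down. In fairness, the paper's proof rests on the equivalent unjustified assertion that $|w|<1$ with $w=(p-1)/(A+Bp)$ forces $\Re p>0$: that says the M\"obius map $w\mapsto (1+Aw)/(1-Bw)$ carries $\mathcal{U}$ into the right half-plane, which is again equivalent to $|A+B|+|A-B|\le 2$ and fails for the same $A,B$. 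So you have faithfully reproduced, and indeed made visible, a gap the paper glosses over; but since you present the inequality as a consequence of the stated hypotheses, your argument as written is incorrect. Both proofs are valid only under the additional restriction $|A+B|+|A-B|\le 2$ (satisfied, for instance, by real $A,B\in [-1,1]$, in particular by the case $A=B$ of Corollary \ref{11}).
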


\begin{proof}
We prove that there exists a real number $r\in \left( 0,1\right] $ such that
the function $L:\mathcal{U}_{r}\times I\rightarrow {\mathbb{C}},$ defined
formally by%
\begin{equation}
L(z,t)=f^{1-\alpha }(e^{-t}z)\left[ f(e^{-t}z)+\left( e^{t}-e^{-t}\right)
z\left( g(e^{-t}z)-\beta \right) \right] ^{\alpha }  \label{3.3}
\end{equation}%
is analytic in $\mathcal{U}_{r}$ for all $t\in I.$

Since $f(z)\neq 0$ for all $z\in \mathcal{U}\diagdown \{0\},$ the function%
\begin{equation}
\varphi _{1}(z,t)=\frac{\left( e^{t}-e^{-t}\right) z\left( g(e^{-t}z)-\beta
\right) }{f(e^{-t}z)}  \label{3.4}
\end{equation}%
is analytic in $\mathcal{U}$.

It follows from 
\begin{equation}
\varphi _{2}(z,t)=1+\frac{\left( e^{t}-e^{-t}\right) z\left(
g(e^{-t}z)-\beta \right) }{f(e^{-t}z)}  \label{3.5}
\end{equation}%
that there exist a $r_{1},$ $0<r_{1}<r$ such that $\varphi _{2}$ is analytic
in $\mathcal{U}_{r_{1}}$ and $\varphi _{2}(0,t)=\left( 1-\beta \right)
e^{2t}+\beta ,\;\varphi _{2}(z,t)\neq 0$ for all $z\in \mathcal{U}_{r_{1}},$ 
$t\in I.$ Therefore, we choose an analytic branch in $\mathcal{U}_{r_{1}}$
of the function 
\begin{equation}
\varphi _{3}(z,t)=\left[ \varphi _{2}(z,t)\right] ^{\alpha }.  \label{3.6}
\end{equation}%
From these considerations it follows that the function 
\begin{eqnarray*}
L(z,t) &=&f^{1-\alpha }(e^{-t}z)\left[ f(e^{-t}z)+\left( e^{t}-e^{-t}\right)
z\left( g(e^{-t}z)-\beta \right) \right] ^{\alpha } \\
&=&f(e^{-t}z)\varphi _{3}(z,t)=a_{1}(t)z+....
\end{eqnarray*}%
is an analytic function in $\mathcal{U}_{r_{1}}$ for all $t\in I$ $.$

After simple calculation we have%
\begin{equation}
a_{1}(t)=e^{(2\alpha -1)t}\left[ \beta e^{-2t}+1-\beta \right] ^{\alpha }
\label{3.7}
\end{equation}%
for which we consider the uniform branch equal to $1$ at the origin. Because 
$\Re (\alpha )>\frac{1}{2},$ we have that%
\begin{equation*}
\lim_{t\rightarrow \infty }|a_{1}(t)|=\infty .
\end{equation*}%
Moreover, $a_{1}(t)\neq 0$ for all $t\in I.$

From the analyticity of $L(z,t)$ in $\mathcal{U}_{r_{1}},$ it follows that
there exists a number $r_{2},$ $0<r_{2}<r_{1},$ and a constant $K=K(r_{2})$
such that%
\begin{equation*}
\left\vert \frac{L(z,t)}{a_{1}(t)}\right\vert <K,\text{ }\forall z\in 
\mathcal{U}_{r_{2}},\;t\in I.
\end{equation*}%
Then, by Montel's Theorem, $\left\{ \frac{L(z,t)}{a_{1}(t)}\right\} _{t\in
I} $ is a normal family in $\mathcal{U}_{r_{2}}.$ From the analyticity of $%
\frac{\partial L(z,t)}{\partial t},$ we obtain that for all fixed numbers $%
T>0$ and $r_{3},\;0<r_{3}<r_{2},$ there exists a constant $K_{1}>0$ (that
depends on $T$ and $r_{3}$) such that%
\begin{equation*}
\left\vert \frac{\partial L(z,t)}{\partial t}\right\vert <K_{1},\text{ }%
\forall z\in \mathcal{U}_{r_{3}},\;t\in \left[ 0,T\right] .
\end{equation*}%
Therefore, the function $L(z,t)$ is locally absolutely continuous in $I,$
locally uniform with respect to $\mathcal{U}_{r_{3}}.$

Let $p:\mathcal{U}_{r}\times I\rightarrow {\mathbb{C}}$ be the analytic
function in $\mathcal{U}_{r},\;0<r<r_{3},$ for all $t\in I,$ defined by%
\begin{equation*}
p(z,t)={\frac{\partial L(z,t)}{\partial t}\diagup z\frac{\partial L(z,t)}{%
\partial z}}.
\end{equation*}%
If the function%
\begin{equation}
w(z,t)=\frac{p(z,t)-1}{A+Bp(z,t)}=\frac{\frac{\partial L(z,t)}{\partial t}-%
\frac{z\partial L(z,t)}{\partial z}}{A\frac{z\partial L(z,t)}{\partial z}+B%
\frac{\partial L(z,t)}{\partial t}}  \label{3.8}
\end{equation}%
is analytic in $\mathcal{U}\times I$ and $\left\vert w(z,t)\right\vert <1,$
for all $z\in \mathcal{U}\;$and$\mathrm{\;}t\in I,$ then $p(z,t)$ has an
analytic extension with positive real part in $\mathcal{U},$ for all $t\in
I. $ From equality (\ref{3.8}) we have%
\begin{equation}
w(z,t)=\frac{-2\phi (z,t)}{\left( A-B\right) \phi (z,t)+A+B}  \label{3.9}
\end{equation}%
for $z\in \mathcal{U}$ and $t\in I,$ where%
\begin{equation}
\phi (z,t)=\left( \frac{1}{\alpha }\frac{f^{\prime }(e^{-t}z)}{%
g(e^{-t}z)-\beta }-1\right) e^{-2t}+\left( 1-e^{-2t}\right) \left[ \left( 
\frac{1-\alpha }{\alpha }\right) \frac{e^{-t}zf^{\prime }(e^{-t}z)}{%
f(e^{-t}z)}+\frac{e^{-t}zg^{\prime }(e^{-t}z)}{g(e^{-t}z)-\beta }\right]
\label{3.10}
\end{equation}

From (\ref{eq4}), (\ref{3.9}), (\ref{3.10}) and $\Re (\alpha )>\frac{1}{2}$
we have%
\begin{equation}
\left\vert w(z,0)\right\vert =\left\vert \frac{1}{\alpha }\left( \frac{%
f^{\prime }(z)}{g(z)-\beta }-1\right) \right\vert <\frac{\left\vert
A+B\right\vert }{2-\left\vert A-B\right\vert }  \label{3.13}
\end{equation}%
and%
\begin{equation}
\left\vert w(0,t)\right\vert =\left\vert \left( \frac{1}{\alpha \left(
1-\beta \right) }-1\right) e^{-2t}\right\vert <\frac{\left\vert
A+B\right\vert }{2-\left\vert A-B\right\vert }.  \label{3.14}
\end{equation}%
where $A+B\neq 0,\;\left\vert A-B\right\vert <2,\;\left\vert A\right\vert
\leq 1$ and $\left\vert B\right\vert \leq 1.$

Since $\left\vert e^{-t}z\right\vert \leq \left\vert e^{-t}\right\vert
=e^{-t}<1$ for all $z\in \overline{\mathcal{U}}=\left\{ z\in {\mathbb{C}}%
:\;\left\vert z\right\vert \leq 1\right\} $ and $t>0,$ we find that $w(z,t)$
is an analytic function in $\overline{\mathcal{U}}.$ Using the maximum
modulus principle it follows that for all $z\in \mathcal{U}-\{0\}$ and each $%
t>0$ arbitrarily fixed there exists $\theta =\theta (t)\in {\mathbb{R}}$
such that%
\begin{equation}
\left\vert w(z,t)\right\vert <\max_{\left\vert z\right\vert =1}\left\vert
w(z,t)\right\vert =\left\vert w(e^{i\theta },t)\right\vert ,  \label{3.15}
\end{equation}%
for all $z\in \mathcal{U}\;$and$\mathrm{\;}t\in I.$

Denote $u=e^{-t}e^{i\theta }.$ Then $\left\vert u\right\vert =e^{-t}$ and
from (\ref{3.9}) we have%
\begin{equation*}
\left\vert w(e^{i\theta },t)\right\vert =\left\vert \frac{2\phi (e^{i\theta
},t)}{\left( A-B\right) \phi (e^{i\theta },t)+A+B}\right\vert
\end{equation*}%
where%
\begin{equation*}
\phi (e^{i\theta },t)=\left( \frac{1}{\alpha }\frac{f^{\prime }(u)}{%
g(u)-\beta }-1\right) \left\vert u\right\vert ^{2}+\left( 1-\left\vert
u\right\vert ^{2}\right) \left[ \left( \frac{1-\alpha }{\alpha }\right) 
\frac{uf^{\prime }(u)}{f(u)}+\frac{ug^{\prime }(u)}{g(u)-\beta }\right]
\end{equation*}%
Because $u\in \mathcal{U},$ the inequality (\ref{eq5}) implies that%
\begin{equation*}
\left\vert w(e^{i\theta },t)\right\vert \leq 1,
\end{equation*}%
for all $z\in \mathcal{U}\;$and$\mathrm{\;}t\in I.$ Therefore $\left\vert
w(z,t)\right\vert <1$ for all $z\in \mathcal{U}\;$and$\mathrm{\;}t\in I.$

Since all the conditions of Theorem \ref{t1} are satisfied, we obtain that
the function $L(z,t)$ has an analytic and univalent extension to the whole
unit disk $\mathcal{U},$ for all $t\in I.$ For $t=0$ we have $L(z,0)=f(z),$
for $z\in \mathcal{U}$ and therefore the function $f$ is analytic and
univalent in $\mathcal{U}.$
\end{proof}

If we take $A=B$ in Theorem \ref{t2}, we get the following univalence
criterion.

\begin{corollary}
\label{11}Consider $f\in \mathcal{A}$ and $g$ be an analytic function in $%
\mathcal{U},\;g(z)=1+b_{1}z+....$ Let $\alpha ,\beta \;$and $A$ are complex
numbers such that $\Re (\alpha )>\frac{1}{2},$ $A\neq 0,\;\left\vert
A\right\vert \leq 1.$ If the inequalities%
\begin{equation}
\left\vert \frac{1}{\alpha }\left( \frac{f^{\prime }(z)}{g(z)-\beta }%
-1\right) \right\vert <\left\vert A\right\vert  \label{eq7}
\end{equation}%
and%
\begin{equation}
\left\vert \left( \frac{f^{\prime }(z)}{g(z)-\beta }-1\right) \left\vert
z\right\vert ^{2}+\left( 1-\left\vert z\right\vert ^{2}\right) \left[ \left( 
\frac{1-\alpha }{\alpha }\right) \frac{zf^{\prime }(z)}{f(z)}+\frac{%
zg^{\prime }(z)}{g(z)-\beta }\right] \right\vert \leq \left\vert A\right\vert
\label{eq8}
\end{equation}%
are satisfied for all $z\in \mathcal{U},$ then the function $f$ is univalent
in $\mathcal{U}$.
\end{corollary}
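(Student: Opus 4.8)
The plan is to deduce the corollary directly from Theorem \ref{t2} by specializing to the case $A = B$; no new analytic work is needed, only the verification that everything reduces correctly. First I would check that the parameter hypotheses of Theorem \ref{t2} collapse to exactly those assumed here. Setting $A = B$, the requirement $A + B \neq 0$ becomes $2A \neq 0$, i.e.\ $A \neq 0$; the requirement $\left\vert A - B\right\vert < 2$ becomes $0 < 2$, which holds automatically; and the two bounds $\left\vert A\right\vert \leq 1$ and $\left\vert B\right\vert \leq 1$ coincide into the single condition $\left\vert A\right\vert \leq 1$. Thus the hypotheses $\Re(\alpha) > \tfrac{1}{2}$, $A \neq 0$, $\left\vert A\right\vert \leq 1$ stated in the corollary are precisely what is required for all the parameter constraints of Theorem \ref{t2} to hold.

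Next I would simplify the right-hand sides of the inequalities (\ref{eq4}) and (\ref{eq5}) under $A = B$. Since $\left\vert A + B\right\vert = \left\vert 2A\right\vert = 2\left\vert A\right\vert$ and $\left\vert A - B\right\vert = 0$, the right-hand side of (\ref{eq4}) becomes $\frac{2\left\vert A\right\vert}{2 - 0} = \left\vert A\right\vert$, so (\ref{eq4}) turns into exactly (\ref{eq7}). For (\ref{eq5}) the crucial observation is that the subtracted constant $\frac{(\overline{A} - \overline{B})(A + B)}{4 - \left\vert A - B\right\vert^{2}}$ vanishes, because $\overline{A} - \overline{B} = 0$ when $A = B$; moreover $4 - \left\vert A - B\right\vert^{2} = 4$, so the bound $\frac{2\left\vert A + B\right\vert}{4 - \left\vert A - B\right\vert^{2}}$ becomes $\frac{4\left\vert A\right\vert}{4} = \left\vert A\right\vert$. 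Hence (\ref{eq5}) reduces exactly to (\ref{eq8}).

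Putting these together, the hypotheses (\ref{eq7}) and (\ref{eq8}) of the corollary are nothing but (\ref{eq4}) and (\ref{eq5}) evaluated at $A = B$, and the parameter constraints match in the same way. Therefore Theorem \ref{t2} applies verbatim and yields that $f$ is univalent in $\mathcal{U}$. There is essentially no obstacle: the only steps demanding care are the bookkeeping remarks that the strict inequality $\left\vert A - B\right\vert < 2$ becomes vacuous and that the extra term in (\ref{eq5}) collapses to zero, both of which are immediate consequences of $A = B$.
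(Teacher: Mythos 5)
Your proof is correct and coincides with the paper's own route: the corollary is obtained there precisely by setting $A=B$ in Theorem \ref{t2}, and your verification that the parameter constraints collapse to $A\neq 0$, $\left\vert A\right\vert \leq 1$ and that (\ref{eq4}), (\ref{eq5}) reduce to (\ref{eq7}), (\ref{eq8}) (the subtracted constant vanishing and both bounds becoming $\left\vert A\right\vert$) is exactly the intended specialization.
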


If we choose $\alpha =1$ in Theorem \ref{t2}, we obtain the following
univalence criterion.

\begin{corollary}
\label{c1}Consider $f\in \mathcal{A}$ and $g$ be an analytic function in $%
\mathcal{U},\;g(z)=1+b_{1}z+....$ Let $\beta ,A$ and $B$ complex numbers
such that $A+B\neq 0,\;\left\vert A-B\right\vert <2,\;\left\vert
A\right\vert \leq 1$ and $\left\vert B\right\vert \leq 1.$ If the
inequalities%
\begin{equation}
\left\vert \frac{f^{\prime }(z)}{g(z)-\beta }-1\right\vert <\frac{\left\vert
A+B\right\vert }{2-\left\vert A-B\right\vert }  \label{eq9}
\end{equation}%
and%
\begin{equation}
\left\vert \left( \frac{f^{\prime }(z)}{g(z)-\beta }-1\right) \left\vert
z\right\vert ^{2}+\left( 1-\left\vert z\right\vert ^{2}\right) \frac{%
zg^{\prime }(z)}{g(z)-\beta }-\frac{\left( \overline{A}-\overline{B}\right)
\left( A+B\right) }{4-\left\vert A-B\right\vert ^{2}}\right\vert \leq \frac{%
2\left\vert A+B\right\vert }{4-\left\vert A-B\right\vert ^{2}}  \label{eq10}
\end{equation}%
are satisfied for all $z\in \mathcal{U},$ then the function $f$ is univalent
in $\mathcal{U}$.
\end{corollary}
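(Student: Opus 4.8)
The plan is to derive Corollary~\ref{c1} directly as the special case $\alpha=1$ of Theorem~\ref{t2}, so that no fresh Loewner-chain construction is required. First I would confirm that the hypotheses of Theorem~\ref{t2} hold for this choice: the conditions $A+B\neq 0$, $|A-B|<2$, $|A|\leq 1$ and $|B|\leq 1$ on the parameters are imposed identically in both statements, while the standing requirement $\Re(\alpha)>\tfrac12$ is satisfied because $\alpha=1$ gives $\Re(\alpha)=1>\tfrac12$. Hence Theorem~\ref{t2} may be invoked with $\alpha=1$.

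Next I would substitute $\alpha=1$ into the two hypotheses~(\ref{eq4}) and~(\ref{eq5}) and check that they reduce exactly to~(\ref{eq9}) and~(\ref{eq10}). In~(\ref{eq4}) the factor $\tfrac1\alpha$ becomes $1$, so the left-hand side collapses to $\bigl|\tfrac{f'(z)}{g(z)-\beta}-1\bigr|$, which is precisely~(\ref{eq9}). In~(\ref{eq5}) the decisive simplification is that the coefficient $\tfrac{1-\alpha}{\alpha}$ vanishes at $\alpha=1$; the term $\tfrac{1-\alpha}{\alpha}\tfrac{zf'(z)}{f(z)}$ inside the bracket therefore drops out and the inequality becomes~(\ref{eq10}) verbatim. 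Thus the two hypotheses of the corollary coincide with the $\alpha=1$ specialization of the two hypotheses of the theorem.

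Having matched both hypotheses, the conclusion that $f$ is univalent in $\mathcal{U}$ follows at once from Theorem~\ref{t2}. I do not anticipate any real obstacle; the only point worth a remark is that the Loewner chain~(\ref{3.3}) degenerates at $\alpha=1$ into $L(z,t)=f(e^{-t}z)+(e^{t}-e^{-t})z\bigl(g(e^{-t}z)-\beta\bigr)$, where the prefactor $f^{1-\alpha}=f^{0}$ equals $1$ and the branch of $\varphi_3=[\varphi_2]^{\alpha}$ used in the proof of Theorem~\ref{t2} reduces to $\varphi_2$ itself. Since this merely simplifies the construction rather than invalidating it, every step of the proof of Theorem~\ref{t2} remains valid, and the corollary follows.
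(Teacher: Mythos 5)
Your proof is correct and matches the paper's approach exactly: the paper introduces Corollary~\ref{c1} with the single line ``If we choose $\alpha=1$ in Theorem~\ref{t2}, we obtain the following univalence criterion,'' which is precisely your specialization argument. Your verification that (\ref{eq4}) and (\ref{eq5}) collapse to (\ref{eq9}) and (\ref{eq10}) when the factor $\frac{1-\alpha}{\alpha}$ vanishes is the (unstated) content of that remark, so nothing further is needed.
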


\begin{corollary}
\label{c2}Consider $f\in \mathcal{A}$ and $g$ be an analytic function in $%
\mathcal{U},\;g(z)=1+b_{1}z+....$ Let $A$ and $B$ complex numbers such that $%
A+B\neq 0,\;\left\vert A-B\right\vert <2,\;\left\vert A\right\vert \leq 1$
and $\left\vert B\right\vert \leq 1.$ If the inequalities%
\begin{equation}
\left\vert \frac{f^{\prime }(z)}{g(z)}-1\right\vert <\frac{\left\vert
A+B\right\vert }{2-\left\vert A-B\right\vert }  \label{eq11}
\end{equation}%
and%
\begin{equation}
\left\vert \left( \frac{f^{\prime }(z)}{g(z)}-1\right) \left\vert
z\right\vert ^{2}+\left( 1-\left\vert z\right\vert ^{2}\right) \frac{%
zg^{\prime }(z)}{g(z)}-\frac{\left( \overline{A}-\overline{B}\right) \left(
A+B\right) }{4-\left\vert A-B\right\vert ^{2}}\right\vert \leq \frac{%
2\left\vert A+B\right\vert }{4-\left\vert A-B\right\vert ^{2}}  \label{eq12}
\end{equation}%
are satisfied for all $z\in \mathcal{U},$ then the function $f$ is univalent
in $\mathcal{U}$.
\end{corollary}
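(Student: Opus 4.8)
The plan is to derive this corollary directly from Corollary \ref{c1} (equivalently, from Theorem \ref{t2}) by specializing the parameter $\beta$. Recall that Corollary \ref{c1} is precisely Theorem \ref{t2} with $\alpha=1$, so that the term $\left(\frac{1-\alpha}{\alpha}\right)\frac{zf^{\prime}(z)}{f(z)}$ appearing in (\ref{eq5}) already drops out, leaving the two inequalities (\ref{eq9}) and (\ref{eq10}). The only difference between Corollary \ref{c1} and the present statement is thus the value of $\beta$.

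First I would set $\beta=0$ in Corollary \ref{c1}. Under this choice every occurrence of $g(z)-\beta$ collapses to $g(z)$: the quotient $\frac{f^{\prime}(z)}{g(z)-\beta}$ becomes $\frac{f^{\prime}(z)}{g(z)}$ and the logarithmic term $\frac{zg^{\prime}(z)}{g(z)-\beta}$ becomes $\frac{zg^{\prime}(z)}{g(z)}$. Consequently the hypotheses (\ref{eq9}) and (\ref{eq10}) transform verbatim into the hypotheses (\ref{eq11}) and (\ref{eq12}) of the corollary to be proved.

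Next I would check that the constraints on the parameters carry over unchanged. Corollary \ref{c1} requires $A+B\neq0$, $\left\vert A-B\right\vert<2$, $\left\vert A\right\vert\leq1$ and $\left\vert B\right\vert\leq1$, which are exactly the conditions imposed here; no condition involving $\beta$ survives the specialization. Since the right-hand sides $\frac{\left\vert A+B\right\vert}{2-\left\vert A-B\right\vert}$ and $\frac{2\left\vert A+B\right\vert}{4-\left\vert A-B\right\vert^{2}}$ do not depend on $\beta$, they remain identical as well.

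I do not expect a genuine obstacle here, since the argument is a pure specialization. The only point requiring a moment of care is the well-definedness of the expressions after setting $\beta=0$: one must ensure $g(z)\neq0$ on $\mathcal{U}$ so that $\frac{f^{\prime}(z)}{g(z)}$ and $\frac{zg^{\prime}(z)}{g(z)}$ make sense. This is the $\beta=0$ instance of the implicit nondegeneracy condition $g(z)\neq\beta$ underlying Theorem \ref{t2}; in the Loewner-chain construction it corresponds to $\varphi_{2}(0,t)=(1-\beta)e^{2t}+\beta=e^{2t}\neq0$, so the analytic branch used in that proof continues to exist. With this observed, invoking Corollary \ref{c1} with $\beta=0$ yields the univalence of $f$ in $\mathcal{U}$ and completes the proof.
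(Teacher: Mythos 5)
Your proposal is correct and coincides with the paper's own proof, which likewise obtains Corollary \ref{c2} simply by taking $\beta=0$ in Corollary \ref{c1} (itself the $\alpha=1$ case of Theorem \ref{t2}). Your additional remark on the nondegeneracy condition $g(z)\neq 0$ is a reasonable point of care that the paper leaves implicit, but it does not change the argument.
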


\begin{proof}
It results from Corollary \ref{c1} with $\alpha =1$ and $\beta =0.$
\end{proof}

For $g(z)=f^{\prime }(z)$ in Corollary \ref{c2}, we have the following
univalence criterion.

\begin{corollary}
\label{c3}Consider $f\in \mathcal{A}.$ Let $A$ and $B$ complex numbers such
that $A+B\neq 0,\;\left\vert A-B\right\vert <2,\;\left\vert A\right\vert
\leq 1$ and $\left\vert B\right\vert \leq 1.$ If the inequality%
\begin{equation}
\left\vert \left( 1-\left\vert z\right\vert ^{2}\right) \frac{zf^{\prime
\prime }(z)}{f^{\prime }(z)}-\frac{\left( \overline{A}-\overline{B}\right)
\left( A+B\right) }{4-\left\vert A-B\right\vert ^{2}}\right\vert \leq \frac{%
2\left\vert A+B\right\vert }{4-\left\vert A-B\right\vert ^{2}}  \label{eq13}
\end{equation}%
is satisfied for all $z\in \mathcal{U},$ then the function $f$ is univalent
in $\mathcal{U}$.
\end{corollary}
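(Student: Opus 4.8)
The plan is to obtain Corollary \ref{c3} directly as the specialization $g(z)=f^{\prime }(z)$ of Corollary \ref{c2}, so the real work reduces to checking that the two hypotheses of Corollary \ref{c2} collapse, respectively, to a triviality and to the single inequality (\ref{eq13}). First I would verify that the choice $g=f^{\prime }$ is admissible: since $f\in \mathcal{A}$ gives $f(0)=0$ and $f^{\prime }(0)=1$, the derivative expands as $f^{\prime }(z)=1+2a_{2}z+\cdots $, which is of the required normalized form $g(z)=1+b_{1}z+\cdots $ with $b_{1}=2a_{2}$. The constraints $A+B\neq 0,\;\left\vert A-B\right\vert <2,\;\left\vert A\right\vert \leq 1,\;\left\vert B\right\vert \leq 1$ are identical in both statements, so nothing changes on that side.

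Next I would substitute $g=f^{\prime }$ into the two conditions of Corollary \ref{c2}. Because $\frac{f^{\prime }(z)}{g(z)}=\frac{f^{\prime }(z)}{f^{\prime }(z)}=1$, the quantity $\frac{f^{\prime }(z)}{g(z)}-1$ vanishes identically. Hence the first inequality (\ref{eq11}) becomes $0<\frac{\left\vert A+B\right\vert }{2-\left\vert A-B\right\vert }$, which is automatically true under the standing assumptions $A+B\neq 0$ and $\left\vert A-B\right\vert <2$, since the right-hand side is then a strictly positive real number. Thus the first hypothesis of Corollary \ref{c2} holds for free, with no extra condition on $f$.

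Finally I would treat the second inequality (\ref{eq12}). The vanishing of $\frac{f^{\prime }(z)}{g(z)}-1$ kills the term $\left( \frac{f^{\prime }(z)}{g(z)}-1\right) \left\vert z\right\vert ^{2}$, while the logarithmic derivative simplifies to $\frac{zg^{\prime }(z)}{g(z)}=\frac{zf^{\prime \prime }(z)}{f^{\prime }(z)}$. With these two substitutions the left-hand side of (\ref{eq12}) becomes exactly the left-hand side of (\ref{eq13}), and the right-hand sides already coincide. Consequently, assuming (\ref{eq13}) for all $z\in \mathcal{U}$ is precisely assuming that the second hypothesis of Corollary \ref{c2} holds. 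Both hypotheses of Corollary \ref{c2} being then in force, that corollary yields the univalence of $f$ in $\mathcal{U}$.

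There is no genuine obstacle here; it is a clean specialization. The only point meriting a word of care is the implicit requirement that $f^{\prime }$ be nonvanishing, so that $\frac{zf^{\prime \prime }(z)}{f^{\prime }(z)}$ is well defined. I would note that this is self-consistent: at any putative zero of $f^{\prime }$ in $\mathcal{U}$ the expression $\frac{zf^{\prime \prime }(z)}{f^{\prime }(z)}$ would be unbounded (while $1-\left\vert z\right\vert ^{2}>0$ and the subtracted constant stays finite), so the finite bound on the right-hand side of (\ref{eq13}) could not hold; hence (\ref{eq13}) itself forces $f^{\prime }(z)\neq 0$ throughout $\mathcal{U}$, and the reduction to Corollary \ref{c2} is legitimate.
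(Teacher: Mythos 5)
Your proposal is correct and follows exactly the paper's own route: the paper obtains Corollary \ref{c3} precisely by setting $g(z)=f^{\prime}(z)$ in Corollary \ref{c2}, under which (\ref{eq11}) trivializes and (\ref{eq12}) becomes (\ref{eq13}). Your additional observation that (\ref{eq13}) itself rules out zeros of $f^{\prime}$ in $\mathcal{U}$ is a sound extra check, but the substance of the argument matches the paper.
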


\begin{corollary}
\label{c4}Consider $f\in \mathcal{A}.$ Let $\beta ,A$ and $B$ complex
numbers such that $A+B\neq 0,\;\left\vert A-B\right\vert <2,\;\left\vert
A\right\vert \leq 1$ and $\left\vert B\right\vert \leq 1.$ If the
inequalities%
\begin{equation}
\left\vert \frac{\beta }{f^{\prime }(z)-\beta }\right\vert <\frac{\left\vert
A+B\right\vert }{2-\left\vert A-B\right\vert }  \label{eq14}
\end{equation}%
and%
\begin{equation}
\left\vert \frac{\beta \left\vert z\right\vert ^{2}+\left( 1-\left\vert
z\right\vert ^{2}\right) zf^{\prime \prime }(z)}{f^{\prime }(z)-\beta }-%
\frac{\left( \overline{A}-\overline{B}\right) \left( A+B\right) }{%
4-\left\vert A-B\right\vert ^{2}}\right\vert \leq \frac{2\left\vert
A+B\right\vert }{4-\left\vert A-B\right\vert ^{2}}  \label{eq15}
\end{equation}%
are satisfied for all $z\in \mathcal{U},$ then the function $f$ is univalent
in $\mathcal{U}$.
\end{corollary}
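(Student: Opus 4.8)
The plan is to derive this corollary as a direct specialization of Corollary \ref{c1}, taking the auxiliary analytic function to be $g(z)=f'(z)$. The first thing I would check is that this choice is admissible. Since $f\in\mathcal{A}$, its derivative $f'$ is analytic in $\mathcal{U}$ and satisfies $f'(0)=1$, so $g(z)=f'(z)=1+b_{1}z+\cdots$ has exactly the normalization demanded of $g$ in Corollary \ref{c1}. The parameters $\beta,A,B$ and the constraints $A+B\neq 0$, $\left\vert A-B\right\vert<2$, $\left\vert A\right\vert\le 1$, $\left\vert B\right\vert\le 1$ are inherited unchanged, so no additional restrictions are introduced.

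Next I would substitute $g=f'$ (hence $g'=f''$ and $g-\beta=f'-\beta$) into the hypotheses (\ref{eq9}) and (\ref{eq10}) of Corollary \ref{c1} and simplify. For the first inequality the essential algebraic step is
\[
\frac{f'(z)}{g(z)-\beta}-1=\frac{f'(z)}{f'(z)-\beta}-1=\frac{\beta}{f'(z)-\beta},
\]
which converts (\ref{eq9}) verbatim into (\ref{eq14}). For the second inequality I would apply the same identity to the leading term, replace $g'$ by $f''$, and combine the two resulting fractions over the common denominator $f'(z)-\beta$:
\[
\left(\frac{f'(z)}{f'(z)-\beta}-1\right)\left\vert z\right\vert^{2}+\left(1-\left\vert z\right\vert^{2}\right)\frac{zf''(z)}{f'(z)-\beta}=\frac{\beta\left\vert z\right\vert^{2}+\left(1-\left\vert z\right\vert^{2}\right)zf''(z)}{f'(z)-\beta}.
\]
This recasts (\ref{eq10}) into (\ref{eq15}), while the subtracted constant $(\overline{A}-\overline{B})(A+B)/(4-\left\vert A-B\right\vert^{2})$ and the right-hand bound $2\left\vert A+B\right\vert/(4-\left\vert A-B\right\vert^{2})$ remain untouched.

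Once both hypotheses of Corollary \ref{c1} have been matched in this way, the desired conclusion that $f$ is univalent in $\mathcal{U}$ follows immediately. There is no genuine obstacle: the entire argument is a single substitution, and the only points requiring care are the bookkeeping of the two identities above. As a consistency check one notices that setting $\beta=0$ here trivializes (\ref{eq14}) and reduces (\ref{eq15}) to (\ref{eq13}), so that Corollary \ref{c3} is recovered as the special case $\beta=0$, which is reassuring.
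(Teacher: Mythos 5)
Your proposal is correct and matches the paper's proof exactly: the paper also obtains Corollary \ref{c4} by setting $g(z)=f'(z)$ in Corollary \ref{c1}, and your algebraic identities $\frac{f'(z)}{f'(z)-\beta}-1=\frac{\beta}{f'(z)-\beta}$ and the combination over the common denominator are precisely the (unstated) simplifications behind the paper's one-line proof. Your additional checks — that $g=f'$ has the required normalization $g(0)=1$ and that $\beta=0$ recovers Corollary \ref{c3} — are sound and make the argument more complete than the original.
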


\begin{proof}
It results from Corollary \ref{c1} with $g(z)=f^{\prime }(z).$
\end{proof}

\begin{corollary}
\label{c5}Consider $\beta <0$ in Corollary \ref{c4}. By elementary
calculation we obtain that the inequality (\ref{eq14}) for $A=B=1$ is
equivalent to%
\begin{equation*}
\Re f^{\prime }(z)>\frac{1}{2\beta }\left\vert f^{\prime }(z)\right\vert
^{2},\;z\in \mathcal{U}.
\end{equation*}%
If in the last inequality we let $\beta \rightarrow -\infty $ we obtain that%
\begin{equation*}
\Re f^{\prime }(z)>0.
\end{equation*}

Since (\ref{eq15}) for $A=B=1$ and $\beta \rightarrow -\infty $ it follows
from Corollary \ref{c4} that the function $f$ is univalent in $\mathcal{U}$.
Therefore, we can conclude that the univalence criterion due to
Alexander-Noshiro-Warshawski \cite{Alex}, \cite{Nos}, \cite{Wars} is a limit
case of Corollary \ref{c4}.
\end{corollary}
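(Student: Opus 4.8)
The plan is to substitute $A=B=1$ into the two hypotheses of Corollary~\ref{c4} and then track what happens as $\beta\to-\infty$. First I would record the elementary consequences of this choice: here $|A+B|=2$, $|A-B|=0$, $\overline{A}-\overline{B}=0$ and $4-|A-B|^2=4$, so the right-hand sides of both (\ref{eq14}) and (\ref{eq15}) collapse to $1$ while the subtracted constant in (\ref{eq15}) vanishes. Thus (\ref{eq14}) reduces to $\left|\frac{\beta}{f'(z)-\beta}\right|<1$ and (\ref{eq15}) reduces to $\left|\frac{\beta|z|^2+(1-|z|^2)zf''(z)}{f'(z)-\beta}\right|\le 1$.

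Next I would establish the stated equivalence for (\ref{eq14}). Since $\beta$ is real and negative, the inequality $|\beta|<|f'(z)-\beta|$ squares to $\beta^2<|f'(z)|^2-2\beta\,\Re f'(z)+\beta^2$, i.e.\ $2\beta\,\Re f'(z)<|f'(z)|^2$; dividing by $2\beta<0$ reverses the sign and produces exactly $\Re f'(z)>\frac{1}{2\beta}|f'(z)|^2$, the equivalence announced in the statement. Letting $\beta\to-\infty$ sends $\frac{1}{2\beta}|f'(z)|^2\to 0^-$, so in the limit the condition becomes $\Re f'(z)>0$, the Alexander--Noshiro--Warshawski hypothesis.

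Then I would treat (\ref{eq15}) in the same limit. Writing $\beta=-M$ with $M\to+\infty$, the quotient inside (\ref{eq15}) is $\frac{-M|z|^2+(1-|z|^2)zf''(z)}{f'(z)+M}$; dividing numerator and denominator by $M$ shows that, for each fixed $z\in\mathcal U$, the quotient tends to $-|z|^2$, hence its modulus tends to $|z|^2\le 1$. Consequently (\ref{eq15}) holds in the limit, and Corollary~\ref{c4} (with $g=f'$ and $A=B=1$) exhibits the univalence of every $f$ with $\Re f'>0$, displaying the Alexander--Noshiro--Warshawski criterion as a limiting instance of Corollary~\ref{c4}.

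The delicate point is precisely this passage to the limit: Corollary~\ref{c4} applies for a fixed finite $\beta$, whereas the two reductions above hold only as $\beta\to-\infty$. To make the deduction fully rigorous I would fix one large $M=-\beta$ and verify both inequalities \emph{simultaneously and uniformly} for all $z\in\mathcal U$, which amounts to $|D|^2-|N|^2\ge 0$ for $N,D$ the numerator and denominator of the (\ref{eq15}) quotient; the leading contribution is $M^2(1-|z|^4)>0$, but it degenerates as $|z|\to1$, so the remaining terms must be dominated there. I expect this uniformity to be the main obstacle, and I would resolve it by invoking the Carath\'{e}odory-type estimate $(1-|z|^2)|f''(z)|\le 2\,\Re f'(z)$, valid for the normalized function $f'$ with $\Re f'>0$, to bound the $f''$-contributions uniformly even though $f''$ itself may be unbounded near $|z|=1$; once that estimate is in place the rest is bookkeeping.
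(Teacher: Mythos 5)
Your first three paragraphs reproduce the paper's own argument (indeed with slightly more detail: the paper never computes the pointwise limit $-|z|^{2}$ of the quotient in (\ref{eq15}), which you do). The specialization $A=B=1$, the squaring argument giving the equivalence of (\ref{eq14}) with $\Re f'(z)>\frac{1}{2\beta}|f'(z)|^{2}$, and the passage $\beta\to-\infty$ are all correct and are exactly what the paper does; the paper's claim that Alexander--Noshiro--Warshawski is a \emph{limit case} is meant in precisely this formal sense.

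The genuine problem is your final paragraph. The repair you propose --- fix one large $M=-\beta$ and verify (\ref{eq14}) and (\ref{eq15}) simultaneously for all $z\in\mathcal U$ --- is an attempt to prove a statement that is false, so no estimate can complete it. Take
\begin{equation*}
f(z)=-z-2\log(1-z),\qquad f'(z)=\frac{1+z}{1-z},\qquad f''(z)=\frac{2}{(1-z)^{2}},
\end{equation*}
so that $f\in\mathcal A$ and $\Re f'>0$ in $\mathcal U$. For real $z=r\in(0,1)$, $A=B=1$ and $\beta=-M<0$, the quotient in (\ref{eq15}) equals
\begin{equation*}
\frac{-Mr^{2}+\frac{2r(1+r)}{1-r}}{\frac{1+r}{1-r}+M}
=\frac{2r(1+r)-Mr^{2}(1-r)}{(1+r)+M(1-r)}\ \longrightarrow\ 2
\qquad\text{as } r\to 1^{-},
\end{equation*}
so (\ref{eq15}) is violated near $z=1$ for \emph{every} fixed $\beta<0$. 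Hence the Alexander--Noshiro--Warshawski criterion cannot be obtained by applying Corollary \ref{c4} with any finite $\beta$; it is genuinely only a limit of the hypotheses, which is all the paper's (purely formal) argument asserts. Note also that the Carath\'eodory--Schwarz--Pick estimate $(1-|z|^{2})|f''(z)|\le 2\,\Re f'(z)$ you intend to invoke holds with \emph{equality} precisely for this $f$, so it cannot dominate the boundary behaviour; indeed, combining the triangle inequality with that estimate reduces your verification to $(2|z|-1)\,\Re f'(z)\le M\left(1-|z|^{2}\right)$, which already fails as $|z|\to 1$ even for $f(z)=z$. The correct resolution is to delete the proposed rigorization and state the conclusion as the paper does: the hypotheses of Corollary \ref{c4} degenerate, as $\beta\to-\infty$, to the condition $\Re f'>0$ together with a vacuous inequality, and in that limiting sense the classical criterion is recovered.
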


\begin{remark}
\label{r1}Some particular cases of Theorem \ref{t2} are the following:

(i) When $\alpha =1,$ $\beta =0,$ $A=B=1$ and $g(z)=f^{\prime }(z)$
inequality (\ref{eq5}) becomes%
\begin{equation}
\left( 1-\left\vert z\right\vert ^{2}\right) \left\vert \frac{zf^{\prime
\prime }(z)}{f^{\prime }(z)}\right\vert \leq 1,\text{ }z\in \mathcal{U}
\label{eq6}
\end{equation}%
which is Becker's condition of univalence \cite{Bec}.

(ii) A result due to N. N. Pascu \cite{Pas} is obtained when $\alpha =1,$ $%
A=B=1$ and $g(z)=f^{\prime }(z).$
\end{remark}

\begin{remark}
\label{r2}It is worth to notice that the condition (\ref{eq5}) assures the
univalence of an analytic function in more general case than that of
condition (\ref{eq6}).
\end{remark}

\begin{remark}
\label{r3}If we put $g(z)=\frac{f(z)}{z}$ into (\ref{eq12}), we have%
\begin{equation*}
\left\vert \frac{zf^{\prime }(z)}{f(z)}-1\right\vert \leq \frac{\left\vert
A+B\right\vert }{2-\left\vert A-B\right\vert },\;z\in \mathcal{U}
\end{equation*}%
the class of functions starlike with respect to origin.
\end{remark}

\section{Quasiconformal Extension}

In this section we will obtain the univalence condition given in Theorem \ref%
{t2} to a quasiconformal extension criterion.

\begin{theorem}
\label{t3}Consider $f\in \mathcal{A},$ $g$ be an analytic function in $%
\mathcal{U},\;g(z)=1+b_{1}z+...$ and $k\in \left[ 0,1\right) .$ Let $\alpha
,\beta ,A$ and $B$ complex numbers such that $\Re (\alpha )>\frac{1}{2},$ $%
A+B\neq 0,\;k\left\vert A-B\right\vert <2,\;\left\vert A\right\vert \leq 1$
and $\left\vert B\right\vert \leq 1.$ If the inequalities%
\begin{equation}
\left\vert \frac{1}{\alpha }\left( \frac{f^{\prime }(z)}{g(z)-\beta }%
-1\right) \right\vert <\frac{k\left\vert A+B\right\vert }{2-k\left\vert
A-B\right\vert }  \label{eq7*}
\end{equation}%
and%
\begin{equation}
\left\vert \left( \frac{f^{\prime }(z)}{g(z)-\beta }-1\right) \left\vert
z\right\vert ^{2}+\left( 1-\left\vert z\right\vert ^{2}\right) \left[ \left( 
\frac{1-\alpha }{\alpha }\right) \frac{zf^{\prime }(z)}{f(z)}+\frac{%
zg^{\prime }(z)}{g(z)-\beta }\right] -\frac{k^{2}\left( \overline{A}-%
\overline{B}\right) \left( A+B\right) }{4-k^{2}\left\vert A-B\right\vert ^{2}%
}\right\vert \leq \frac{2k\left\vert A+B\right\vert }{4-k^{2}\left\vert
A-B\right\vert ^{2}}  \label{eq8*}
\end{equation}%
are satisfied for all $z\in \mathcal{U},$ then the function $f$ has a $k-$%
quasiconformal extension to $%
%TCIMACRO{\U{2102} }%
%BeginExpansion
\mathbb{C}
%EndExpansion
.$
\end{theorem}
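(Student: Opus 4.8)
The plan is to run the proof of Theorem \ref{t2} essentially verbatim up to the estimation of the driving function, and then to replace the appeal to Pommerenke's Theorem \ref{t1} by an appeal to Becker's Theorem \ref{t2*}. Concretely, I would introduce the same subordination chain
\[
L(z,t)=f^{1-\alpha}(e^{-t}z)\left[f(e^{-t}z)+(e^{t}-e^{-t})z\bigl(g(e^{-t}z)-\beta\bigr)\right]^{\alpha}
\]
on a disk $\mathcal{U}_{r}$, and reuse the construction via $\varphi_{1},\varphi_{2},\varphi_{3}$ to show that it is analytic in $\mathcal{U}_{r}$ for all $t\in I$, that its first coefficient $a_{1}(t)=e^{(2\alpha-1)t}[\beta e^{-2t}+1-\beta]^{\alpha}$ satisfies $|a_{1}(t)|\to\infty$ because $\Re(\alpha)>\tfrac{1}{2}$, and that $\{L(z,t)/a_{1}(t)\}$ is a normal family which is locally absolutely continuous in $t$ and locally uniform in $z$. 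This entire portion of the argument does not see the numbers $A,B,k$, so it carries over unchanged from the proof of Theorem \ref{t2}.

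Next I would form $p(z,t)=\partial_{t}L/(z\,\partial_{z}L)$ and the auxiliary function $w(z,t)$ of (\ref{3.8}); the same algebra produces $w(z,t)=-2\phi(z,t)/[(A-B)\phi(z,t)+A+B]$ with $\phi$ given by (\ref{3.10}), equivalently $p=(1-\phi)/(1+\phi)$, so that the Becker dilatation satisfies $\tfrac{p-1}{p+1}=-\phi$. The crucial observation is that hypotheses (\ref{eq7*}) and (\ref{eq8*}) are precisely (\ref{eq4}) and (\ref{eq5}) under the substitution $A\mapsto kA$, $B\mapsto kB$: indeed $\tfrac{|kA+kB|}{2-|kA-kB|}=\tfrac{k|A+B|}{2-k|A-B|}$, and the same replacement sends the center and radius of the disk in (\ref{eq5}) to those in (\ref{eq8*}). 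I would therefore repeat the estimate of Theorem \ref{t2} with $kA,kB$ in place of $A,B$: dispose of $t=0$ directly via (\ref{eq7*}); for each fixed $t>0$ observe that $w(\cdot,t)$ is analytic on $\overline{\mathcal{U}}$ because $|e^{-t}z|\le e^{-t}<1$, and apply the maximum modulus principle to reduce the bound to $|z|=1$; there, writing $u=e^{-t}e^{i\theta}\in\mathcal{U}$ and completing the square exactly as in the disk computation underlying (\ref{eq5}), conclude from (\ref{eq8*}) that the boundary values are controlled. Since replacing $(A,B)$ by $(kA,kB)$ turns $w$ into $w/k$, the bound that read $|w(z,t)|<1$ in Theorem \ref{t2} now reads $|w(z,t)|\le k$ for all $z\in\mathcal{U}$, $t\in I$.

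Finally, because $k<1$, this uniform estimate in particular yields $\Re p>0$, so $L(z,t)$ is a genuine Loewner chain, and the bound is exactly of the type demanded by Becker's Theorem \ref{t2*}. Applying that theorem with $L(z,0)=f(z)$ then produces the asserted $k$-quasiconformal extension of $f$ to $\mathbb{C}$. I expect the main obstacle to be precisely the passage from the bound on the generalized quotient $w=\tfrac{p-1}{A+Bp}$ to the Becker dilatation $\tfrac{p-1}{p+1}=-\phi$: one must check that the completing-the-square identity behind (\ref{eq8*}), after the substitution $A\mapsto kA$, $B\mapsto kB$, delivers control of $\left|\tfrac{p-1}{p+1}\right|$ by the sharp constant $k$ rather than by some larger multiple such as $\tfrac{k|A+B|}{2-k|A-B|}$, so that the dilatation matches the value $k$ claimed in the statement. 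The normal-family and absolute-continuity bookkeeping, by contrast, is routine and is inherited verbatim from the proof of Theorem \ref{t2}.
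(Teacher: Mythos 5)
Your plan coincides with the paper's own proof in all of its main steps: the paper likewise reuses the chain $L(z,t)$ of (\ref{3.3}) constructed in Theorem \ref{t2}, likewise reduces the hypotheses to the bound $\left\vert w(z,t)\right\vert \leq k$ for the function $w$ of (\ref{3.9}) (this is the paper's inequality (\ref{eq9*}); your substitution $(A,B)\mapsto (kA,kB)$ is a cleaner packaging of what the paper calls a ``lengthy but elementary calculation'', and it is correct), and then appeals to Becker's Theorem \ref{t2*}. So up to the final step you have reproduced the paper's argument.

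However, the ``main obstacle'' you flag at the end is a genuine gap, not a routine verification, and the paper does not close it either: it simply applies Theorem \ref{t2*} to the quotient $w=(p-1)/(A+Bp)$ of (\ref{3.9}) as if it were the Becker dilatation. It is not. As you yourself note, equating (\ref{3.8}) with (\ref{3.9}) gives $p=(1-\phi )/(1+\phi )$, so the quantity that Theorem \ref{t2*} requires to be bounded by $k$ is
\[
\frac{p(z,t)-1}{p(z,t)+1}=-\phi (z,t).
\]
The bound $\left\vert w(z,t)\right\vert \leq k$ only confines $\phi (z,t)$ to the disk with center $k^{2}(\overline{A}-\overline{B})(A+B)/(4-k^{2}\left\vert A-B\right\vert ^{2})$ and radius $2k\left\vert A+B\right\vert /(4-k^{2}\left\vert A-B\right\vert ^{2})$, and the point of that disk farthest from the origin has modulus
\[
\frac{k\left\vert A+B\right\vert }{2-k\left\vert A-B\right\vert },
\]
which is $\leq k$ if and only if $\left\vert A+B\right\vert +k\left\vert A-B\right\vert \leq 2$. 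That extra inequality does not follow from the stated hypotheses $\left\vert A\right\vert \leq 1$, $\left\vert B\right\vert \leq 1$, $k\left\vert A-B\right\vert <2$: for $A=1$, $B=i$ and $k\in (\sqrt{2}-1,1)$ every hypothesis of the theorem holds, yet $k\sqrt{2}/(2-k\sqrt{2})>k$. Consequently the argument (yours and the paper's alike) delivers at best a $k^{\prime }$-quasiconformal extension with $k^{\prime }=k\left\vert A+B\right\vert /(2-k\left\vert A-B\right\vert )$, and only when $k^{\prime }<1$; the advertised constant $k$ is justified exactly when $\left\vert A+B\right\vert +k\left\vert A-B\right\vert \leq 2$ (for instance when $A=B$, which is why the specialization to Becker's classical criterion, Corollary \ref{c7}, is unaffected). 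So your suspicion is well founded: the step you left unchecked is precisely where the proof, as written in the paper, breaks down, and it can only be repaired by adding the inequality above to the hypotheses or by weakening the conclusion.
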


\begin{proof}
In the proof of Theorem \ref{t2} has been proved that the function $L(z,t)$
given by (\ref{3.3}) is a subordination chain in $\mathcal{U}.$ Applying
Theorem \ref{t2*} to the function $w(z,t)$ given by (\ref{3.9}), we obtain
that the assumption%
\begin{equation}
\left\vert w(z,t)\right\vert =\left\vert \frac{-2\phi (z,t)}{\left(
A-B\right) \phi (z,t)+A+B}\right\vert \leq k,\;z\in \mathcal{U},\;t\geq
0,\;k\in \left[ 0,1\right)  \label{eq9*}
\end{equation}%
where $\phi (z,t)$ is defined by (\ref{3.10}).

Lenghty but elementary calculation shows that the last inequality (\ref{eq9*}%
) is equivalent to%
\begin{equation*}
\left\vert \left( \frac{1}{\alpha }\frac{f^{\prime }(e^{-t}z)}{%
g(e^{-t}z)-\beta }-1\right) e^{-2t}+\left( 1-e^{-2t}\right) \left[ \left( 
\frac{1-\alpha }{\alpha }\right) \frac{e^{-t}zf^{\prime }(e^{-t}z)}{%
f(e^{-t}z)}+\frac{e^{-t}zg^{\prime }(e^{-t}z)}{g(e^{-t}z)-\beta }\right]
\right.
\end{equation*}%
\begin{equation}
\left. -\frac{k^{2}\left( \overline{A}-\overline{B}\right) \left( A+B\right) 
}{4-k^{2}\left\vert A-B\right\vert ^{2}}\right\vert \leq \frac{2k\left\vert
A+B\right\vert }{4-k^{2}\left\vert A-B\right\vert ^{2}}.  \label{eq10*}
\end{equation}%
The inequality (\ref{eq10*}) implies $k-$quasiconformal extensibility of $f.$

The proof is complete.
\end{proof}

If we choose $\alpha =1,\beta =0,g=f^{\prime }$ in Theorem \ref{t3}, we
obtain following corollary.

\begin{corollary}
\label{c6}Consider $f\in \mathcal{A}$ and $k\in \left[ 0,1\right) .$ Let $A$
and $B$ complex numbers such that $A+B\neq 0,\;k\left\vert A-B\right\vert
<2,\;\left\vert A\right\vert \leq 1$ and $\left\vert B\right\vert \leq 1.$
If the inequality%
\begin{equation}
\left\vert \left( 1-\left\vert z\right\vert ^{2}\right) \left( \frac{%
zf^{\prime \prime }(z)}{f^{\prime }(z)}\right) -\frac{k^{2}\left( \overline{A%
}-\overline{B}\right) \left( A+B\right) }{4-k^{2}\left\vert A-B\right\vert
^{2}}\right\vert \leq \frac{2k\left\vert A+B\right\vert }{4-k^{2}\left\vert
A-B\right\vert ^{2}}  \label{eq11*}
\end{equation}%
is satisfied for all $z\in \mathcal{U},$ then the function $f$ has a $k-$%
quasiconformal extension to $%
%TCIMACRO{\U{2102} }%
%BeginExpansion
\mathbb{C}
%EndExpansion
.$
\end{corollary}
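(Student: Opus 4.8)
The plan is to obtain this as a direct specialization of Theorem~\ref{t3} under the substitution $\alpha = 1$, $\beta = 0$, and $g = f'$, so that no fresh subordination-chain or quasiconformal argument is needed: the whole content will be the bookkeeping of how the two hypotheses of Theorem~\ref{t3} degenerate. First I would confirm that the parameter restrictions of Theorem~\ref{t3} are inherited. The requirement $\Re(\alpha) > \tfrac12$ becomes $\Re(1) = 1 > \tfrac12$, and the conditions $A + B \neq 0$, $k\left\vert A-B\right\vert < 2$, $\left\vert A\right\vert \le 1$, $\left\vert B\right\vert \le 1$ and $k \in [0,1)$ are carried over verbatim from the corollary's hypotheses. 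So the admissibility of the parameters is immediate.

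The crux is then tracking how inequalities (\ref{eq7*}) and (\ref{eq8*}) collapse. With $g = f'$ and $\beta = 0$ one has $\dfrac{f'(z)}{g(z)-\beta} = \dfrac{f'(z)}{f'(z)} = 1$, so the quantity $\dfrac{f'(z)}{g(z)-\beta} - 1$ vanishes identically. Consequently the left-hand side of (\ref{eq7*}) equals $0$, while the right-hand side $\dfrac{k\left\vert A+B\right\vert}{2 - k\left\vert A-B\right\vert}$ is strictly positive for $k > 0$, since $A+B \neq 0$ gives $\left\vert A+B\right\vert > 0$ and $k\left\vert A-B\right\vert < 2$ keeps the denominator positive. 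Hence (\ref{eq7*}) is automatically satisfied and imposes no constraint on $f$. This degeneration to a vacuously true statement (rather than a false one) is the one point that genuinely deserves an explicit, if trivial, check.

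For the second inequality, the term $\bigl(\tfrac{f'(z)}{g(z)-\beta}-1\bigr)\left\vert z\right\vert^2$ again vanishes, the coefficient $\tfrac{1-\alpha}{\alpha}$ equals $0$ because $\alpha = 1$, and the surviving bracket reduces through $\dfrac{zg'(z)}{g(z)-\beta} = \dfrac{zf''(z)}{f'(z)}$, so that (\ref{eq8*}) becomes exactly the stated inequality (\ref{eq11*}). Having matched every hypothesis, I would invoke Theorem~\ref{t3} directly to conclude that $f$ admits a $k$-quasiconformal extension to $\mathbb{C}$. There is no substantive obstacle here: the argument is pure specialization, and the only care required is verifying that the auxiliary condition (\ref{eq7*}) degenerates harmlessly and that the reduction of (\ref{eq8*}) to (\ref{eq11*}) is exact.
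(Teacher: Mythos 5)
Your proof is correct and is precisely the paper's approach: the paper presents Corollary \ref{c6} as the immediate specialization $\alpha=1$, $\beta=0$, $g=f^{\prime}$ of Theorem \ref{t3}, with exactly the reductions you record (the ratio term vanishing, the $\frac{1-\alpha}{\alpha}$ coefficient dying, and (\ref{eq8*}) collapsing to (\ref{eq11*})). The only hairline caveat, which the paper shares, is the endpoint $k=0$: there the right-hand side of (\ref{eq7*}) is $0$ and the strict inequality fails, so Theorem \ref{t3} cannot literally be invoked; but in that case (\ref{eq11*}) forces $f^{\prime\prime}\equiv 0$, hence $f(z)=z$, which trivially admits a $0$-quasiconformal (conformal) extension.
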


For $A=B=1$ in Corollary \ref{c6}, we have result of Becker \cite{Bec}.

\begin{corollary}
\label{c7}Consider $f\in \mathcal{A}$ and $k\in \left[ 0,1\right) .$ If the
inequality%
\begin{equation}
\left( 1-\left\vert z\right\vert ^{2}\right) \left\vert \frac{zf^{\prime
\prime }(z)}{f^{\prime }(z)}\right\vert \leq k  \label{eq12*}
\end{equation}%
is satisfied for all $z\in \mathcal{U},$ then the function $f$ has a $k-$%
quasiconformal extension to $%
%TCIMACRO{\U{2102} }%
%BeginExpansion
\mathbb{C}
%EndExpansion
.$
\end{corollary}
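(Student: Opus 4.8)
The plan is to derive this directly from Corollary \ref{c6} by specializing to the choice $A = B = 1$. First I would verify that this choice satisfies all the hypotheses of Corollary \ref{c6}: we have $A + B = 2 \neq 0$; since $A - B = 0$, the quantity $k\left\vert A - B\right\vert = 0 < 2$ for every $k \in [0,1)$; and $\left\vert A\right\vert = \left\vert B\right\vert = 1 \leq 1$. Thus all four admissibility conditions hold, and Corollary \ref{c6} is applicable for this choice of parameters.

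Next I would substitute $A = B = 1$ into the inequality (\ref{eq11*}) and simplify. The two key simplifications are that $\overline{A} - \overline{B} = 0$, which annihilates the subtracted term $\frac{k^{2}(\overline{A}-\overline{B})(A+B)}{4-k^{2}\left\vert A-B\right\vert^{2}}$ on the left-hand side, and that $\left\vert A - B\right\vert^{2} = 0$ together with $\left\vert A + B\right\vert = 2$ collapse the right-hand side. Indeed, the denominator $4 - k^{2}\left\vert A-B\right\vert^{2}$ reduces to $4$ and the numerator $2k\left\vert A+B\right\vert$ reduces to $4k$, so the bound becomes $4k/4 = k$. Consequently (\ref{eq11*}) reduces to $\left(1 - \left\vert z\right\vert^{2}\right)\left\vert \frac{zf^{\prime\prime}(z)}{f^{\prime}(z)}\right\vert \leq k$, which is precisely the hypothesis (\ref{eq12*}).

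The conclusion of Corollary \ref{c6} then yields that $f$ admits a $k$-quasiconformal extension to $\mathbb{C}$, which completes the argument. I expect no substantial obstacle here, since the proof is a routine specialization of an already-established result; the only point requiring any care is confirming that the admissibility conditions on $A$ and $B$ survive the limiting choice $A = B = 1$, and they do, because $\left\vert A - B\right\vert = 0$ trivially satisfies $k\left\vert A - B\right\vert < 2$ and the normalization $\left\vert A\right\vert, \left\vert B\right\vert \leq 1$ holds with equality.
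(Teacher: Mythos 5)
Your proposal is correct and matches the paper's own route exactly: the paper obtains Corollary \ref{c7} precisely by setting $A=B=1$ in Corollary \ref{c6}, which is what you do, and your verification that the hypotheses hold and that (\ref{eq11*}) collapses to $(1-\vert z\vert^{2})\left\vert zf''(z)/f'(z)\right\vert \leq k$ is accurate. The paper states this specialization without writing out the details, so your version simply makes the routine simplification explicit.
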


\begin{acknowledgement}
The present investigation was supported by Atat\"{u}rk University Rectorship
under \textquotedblright The Scientific and Research Project of Atat\"{u}rk
University\textquotedblright , Project No: 2012/173.
\end{acknowledgement}

\end{document}